\documentclass[11pt]{amsart}
\usepackage{amsmath, amssymb, latexsym, bm}

\renewcommand{\baselinestretch}{\baselinestretch}
\renewcommand{\baselinestretch}{1.1}
\numberwithin{equation}{section}

\newtheorem{thm}{Theorem}[section]

\newtheorem{lem}[thm]{Lemma}

\theoremstyle{definition}

\theoremstyle{remark}

\def\no[#1]{\| #1\|}
\def\dd[#1,#2]{\frac{#1}{#2}}
\def\peter[#1]{\langle #1\rangle}
\def\sno[#1]{\no[\mathcal{D}.#1]}


\newcommand{\Q}{\mathbb{Q}}
\newcommand{\R}{\mathbb{R}}
\newcommand{\Z}{\mathbb{Z}}


\newcommand{\GL}{\mathrm{GL}}



\newcommand{\diag}{\mathrm{diag}}



\newcommand{\ve}{\varepsilon}

\newcommand{\s}{\sigma}

\newcommand{\bv}{\boldsymbol{v}}

\newcommand{\bo}{\boldsymbol{0}}

\newcommand{\bxi}{\bm \xi}


\newcommand{\bx}{\bm x}

\newcommand{\bt}{\bm t}


\newcommand{\be}{\begin{equation}
}

\newcommand{\ee}{\end{equation}}

\begin{document}

\title[Explicit equivalence of quadratic forms]{EXPLICIT RESULT ON EQUIVALENCE OF RATIONAL QUADRATIC FORMS AVOIDING PRIMES}
\author{Wai Kiu Chan}
\address{Department of Mathematics and Computer Science, Wesleyan University, Middletown CT, 06459, USA}
\email{wkchan@wesleyan.edu}

\author{Haochen Gao}
\address{WesBox 91800, Wesleyan University, Middletown CT, 06459, USA}
\email{hgao@wesleyan.edu}

\author{Han Li}
\address{Department of Mathematics and Computer Science, Wesleyan University, Middletown CT, 06459, USA}
\email{hli03@wesleyan.edu}

\subjclass[2010]{Primary 11E12}

\keywords{Equivalence of quadratic forms}
\begin{abstract}
Given a pair of regular quadratic forms over $\Q$ which are in the same genus and a finite set of primes $P$, we show that there is an effective way to determine a rational equivalence between these two quadratic forms which are integral over every prime in $P$.  This answers one of the principal questions posed by Conway and Sloane in their book  {\em Sphere packings, lattices and groups},  Grundlehren der Mathematischen Wissenschaften [Fundamental Principles of Mathematical Sciences], Vol 290, Springer-Verlag, New York, 1999; page 402.
\end{abstract}
\maketitle


\section{Introduction}\label{intro}

A fundamental question in the arithmetic theory of quadratic forms is to decide when two given rational quadratic forms are integrally equivalent. For the sake of convenience, we will identify each quadratic form with its Gram matrix.   Given a pair of $n$-ary regular quadratic forms $F$ and $G$ over $\Q$,  the question is to decide whether there is a matrix $\tau \in \GL_n(\Z)$ such that
\begin{equation} \label{equivalence_problem}
\tau'F\tau = G,
\end{equation}
where $\tau'$ denotes the transpose of $\tau$.   Gauss' reduction theory provides a quite satisfactory solution to this question when $n = 2$.  Therefore in the subsequent discussion we will focus mainly on the case when $n \geq 3$, although many results mentioned later also hold for the binary case.  When $F$ and $G$ are positive definite, one can deduce from \eqref{equivalence_problem} explicit upper bounds on the height of $\tau$ in terms of the heights of $F$ and $G$ (the height of a matrix, denoted by $H$, and other height functions will be defined later in Section \ref{nota}).  Hence, in principle, we could perform an exhaustive search for $\tau$, and a fortiori provide an effective solution to the question in this case.

When $F$ and $G$ are indefinite, \eqref{equivalence_problem} has infinitely many solutions and an exhaustive search for $\tau$ is not possible.  However, Siegel \cite{S1} showed that there exists a function $C(n, F, G)$ such that if \eqref{equivalence_problem} has a solution then it must have one whose height is  less than $C(n, F, G)$.   Although Siegel did not give an explicit upper bound for $C(n,F,G)$, his method is effective.  Indeed, by following Siegel's argument Straumann \cite{ST} showed that
$$C(n, F, G) \leq \exp\left(k_n(H(F)H(G))^{\delta_n}\right)$$
where $\delta_n$ is an explicit polynomial in $n$ and $k_n$ is a constant depending only on $n$.  Masser \cite[page 252]{MA} conjectured that (for $n \geq 3$)
$$C(n,F,G) \ll_n (H(F) + H(G))^{\lambda_n}$$
where $\lambda_n$ is a constant depending only on $n$.  This conjecture has been confirmed by Dietmann \cite{D} for $n = 3$ (and for $n \geq 4$ when $\det(F)$ is cube-free), and by the third author and Margulis \cite[Theorem 1]{LM} for all $n \geq 3$ who also show that $\lambda_n$ can be taken to be a polynomial of $n$.  Thus, in principle, there is a deterministic algorithm to decide if $F$ and $G$ are equivalent over $\Z$ and, if they are, to exhibit an explicit integral equivalence $\tau$ satisfying \eqref{equivalence_problem}.

Another approach is appealing to the theory of spinor genus.  Before checking whether $F$ and $G$ are equivalent over $\Z$,  we could check first if they are in the same genus, that is, if they are equivalent over $\R$ and over $\Z_p$ for every prime $p$.  Over $\R$ this is straightforward; by Sylvester's Law of Inertia we just need to make sure that $F$ and $G$ have the same numbers (counted with multiplicity) of positive as well as negative eigenvalues.  Over $\Z_p$, this can be done effectively by using the invariants deriving from the Jordan decompositions of $F$ and $G$; see \cite[Chapter IX]{OM} or \cite[Chapter 8]{CA}.  When $n \geq 3$, the spinor genus and the class of an indefinite quadratic form coincide \cite[104:5]{OM}.  Therefore, the question is now reduced to deciding if $F$ and $G$ are in the same spinor genus, assuming that $F$ and $G$ are already in the same genus.  There are effective algorithms to do just that; see \cite{BH} or \cite[Chpater 13, Section 9]{CS}.  These algorithms usually require a rational matrix $\tau$ which satisfies \eqref{equivalence_problem} and is invertible over the ring $\Z^P := \bigcap_{p\in P} (\Z_p\cap \Q)$, where $P$ is the set of prime divisors of $2\det(F)$.  Finding such an explicit $\tau$ is one of the principal questions posed by Conway and Sloane in \cite[Page 402, Question (G4)]{CS}:
\begin{quote}
(G4)\quad ``{\em If two quadratic forms are in the same genus, find an explicit rational equivalence whose denominator is prime to any given number.}"
\end{quote}
Existence of such rational equivalence can be deduced from the weak approximation property  of the special orthogonal groups \cite[101:7]{OM}.  Siegel \cite{SI} also demonstrated the existence by a different argument making use of the Cayley Transformation. Both approaches have not been made effective.  However, a careful review of Siegel's argument suggests to us that his proof can be made effective and this is our approach to a solution to (G4). Our main result  (Theorem \ref{main_theorem}) is an explicit upper bound on the height of a skew-symmetric matrix whose image under the Cayley transformation is the rational equivalence wanted in (G4).

The rest of the paper is organized as follows. Throughout this paper, $k, m, n$ are positive integers and $p$ is always a prime number.  Section 2 contains preliminary materials on estimates pertaining to the $p$-adic valuations and on quadratic forms.  The main theorem, Theorem \ref{main_theorem}, will be presented in Section \ref{main}.

\section{Preliminaries}\label{nota}

For any prime number $p$, $\vert\,\,\vert_p$ is the $p$-adic valuation normalized so that $\vert p \vert_p = p^{-1}$.  The norm $\Vert\,\, \Vert_p$ on any $\Q_p^m$ is the $p$-adic sup-norm, that is,
$$\Vert \bxi \Vert_p = \max_{1\leq i \leq n}\{\vert \xi_i \vert_p \}, \quad \bxi = (\xi_1, \ldots, \xi_m) \in \Q_p^m.$$
Let $h_p(\bxi) : = \max\{\Vert \bxi \Vert_p, 1\}$, which is often called the $p$-adic inhomogeneous height of $\bxi$.  It is obvious that $\Vert \bxi \Vert_p \leq h_p(\bxi)$.  For any $m\times n$ matrix $A$ over $\Q_p$, $\Vert A \Vert_p$ and $h_p(A)$ are defined by viewing $A$ as a vector in $\Q_p^{mn}$.

\begin{lem} \label{INQ}
Let $X, Y$ be two $n\times n$ matrices over $\Q_p$.  Then:
\begin{enumerate}
        \item $\no[X + Y]_p \leq \max\{\no[X]_p, \no[Y]_p\}$.
        \item $\no[XY]_p \leq \no[X]_p\no[Y]_p$.
        \item $\vert \det(X)\vert_p \leq \no[X]_p^n \leq h_p(X)^{n}$.
        \item If $X$ is invertible, then $\no[X^{-1}]_p \leq \frac{\no[X]^{n-1}_p}{\vert\det(X)\vert_p}$.
\end{enumerate}
\end{lem}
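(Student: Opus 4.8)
The plan is to verify each of the four inequalities directly from the definition of the $p$-adic sup-norm and the ultrametric inequality on $\Q_p$, in the order stated, since each part builds naturally on the previous ones.

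For part (1): write the $(i,j)$ entry of $X+Y$ as $x_{ij}+y_{ij}$ and apply the ultrametric inequality $\vert x_{ij}+y_{ij}\vert_p \leq \max\{\vert x_{ij}\vert_p, \vert y_{ij}\vert_p\}$ entrywise, then take the maximum over all $(i,j)$; the right-hand side is bounded by $\max\{\no[X]_p, \no[Y]_p\}$. For part (2): the $(i,j)$ entry of $XY$ is $\sum_k x_{ik}y_{kj}$, and by the ultrametric inequality $\vert \sum_k x_{ik}y_{kj}\vert_p \leq \max_k \vert x_{ik}\vert_p\vert y_{kj}\vert_p \leq \no[X]_p\no[Y]_p$; maximizing over $(i,j)$ gives the claim.

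For part (3): the Leibniz expansion $\det(X) = \sum_{\s} \mathrm{sgn}(\s)\prod_{i} x_{i\s(i)}$ is a sum of terms each of which is a product of $n$ entries, so by the ultrametric inequality and part (2)-type reasoning $\vert\det(X)\vert_p \leq \max_\s \prod_i \vert x_{i\s(i)}\vert_p \leq \no[X]_p^n$; the second inequality $\no[X]_p^n \leq h_p(X)^n$ is immediate from $\no[X]_p \leq h_p(X)$. For part (4): apply the adjugate formula $X^{-1} = \det(X)^{-1}\,\mathrm{adj}(X)$, where each entry of $\mathrm{adj}(X)$ is (up to sign) an $(n-1)\times(n-1)$ minor of $X$; by the same Leibniz-expansion argument as in part (3), each such minor has $p$-adic absolute value at most $\no[X]_p^{n-1}$, so $\no[\mathrm{adj}(X)]_p \leq \no[X]_p^{n-1}$, and dividing by $\vert\det(X)\vert_p$ gives $\no[X^{-1}]_p \leq \no[X]_p^{n-1}/\vert\det(X)\vert_p$.

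None of the steps presents a genuine obstacle; the only point requiring a little care is keeping the minor-size bookkeeping straight in parts (3) and (4) — specifically, recognizing that the cofactors appearing in $\mathrm{adj}(X)$ are minors of size $n-1$, which is what produces the exponent $n-1$ rather than $n$. Everything else is a routine application of the two basic properties of the ultrametric absolute value: $\vert a+b\vert_p \leq \max\{\vert a\vert_p,\vert b\vert_p\}$ and $\vert ab\vert_p = \vert a\vert_p\vert b\vert_p$.
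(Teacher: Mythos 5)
Your proof is correct, and it is exactly the standard elementary verification that the paper has in mind when it dismisses the lemma with ``This is clear'': entrywise application of the ultrametric inequality for (1) and (2), the Leibniz expansion for (3), and the adjugate formula with the $(n-1)\times(n-1)$ minor count for (4). Nothing to add or correct.
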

\begin{proof}
This is clear.
\end{proof}

\begin{lem}\label{DET}
Let $X, Y$ be $n\times n$ matrices over $\Q_p$. If $\no[Y-X]_p < \frac{\vert\det(X)\vert_p}{h_p(X)^n}$, then $\vert\det(X)\vert_p = \vert\det(Y)\vert_p$.
\end{lem}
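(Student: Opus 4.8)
The plan is to reduce the claim to showing that a matrix close to the identity has determinant of $p$-adic absolute value $1$. First I would observe that the hypothesis forces $X$ to be invertible: if $\det(X) = 0$, then the right-hand side of the assumed inequality is $0$, which cannot strictly bound the non-negative quantity $\no[Y-X]_p$. So from now on $X \in \GL_n(\Q_p)$, and we may factor
\[
Y = X\bigl(I + X^{-1}(Y-X)\bigr), \qquad\text{hence}\qquad \det(Y) = \det(X)\cdot\det(I + Z),
\]
where $Z := X^{-1}(Y-X)$.

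The key estimate is that $\no[Z]_p < 1$. Using parts (2) and (4) of Lemma \ref{INQ},
\[
\no[Z]_p \leq \no[X^{-1}]_p\,\no[Y-X]_p \leq \frac{\no[X]_p^{\,n-1}}{\vert\det(X)\vert_p}\,\no[Y-X]_p.
\]
Substituting the hypothesis $\no[Y-X]_p < \vert\det(X)\vert_p / h_p(X)^n$ and then using $\no[X]_p \leq h_p(X)$ and $h_p(X) \geq 1$ gives
\[
\no[Z]_p < \frac{\no[X]_p^{\,n-1}}{h_p(X)^n} \leq \frac{1}{h_p(X)} \leq 1.
\]
Thus every entry of $Z$ lies in $\Q_p$ with absolute value $< 1$, i.e.\ $Z$ has all entries in the maximal ideal $p\Z_p$.

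Finally, reducing modulo $p$ we have $I + Z \equiv I$ entrywise over $\Z_p$, so $\det(I+Z) \equiv \det(I) = 1 \pmod{p\Z_p}$; in particular $\det(I+Z)$ is a unit in $\Z_p$, so $\vert\det(I+Z)\vert_p = 1$. Combining this with the factorization $\det(Y) = \det(X)\det(I+Z)$ yields $\vert\det(Y)\vert_p = \vert\det(X)\vert_p$, as claimed. There is no genuine obstacle here; the only point requiring care is chaining the inequalities in the correct direction and exploiting $h_p(X)\geq 1$ so that the bound on $\no[Z]_p$ indeed collapses below $1$.
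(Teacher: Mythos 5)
Your proof is correct, but it is structurally different from the paper's. The paper expands $\det(Y)-\det(X)$ term by term via the Leibniz formula, writes each $y_{ij}$ as $x_{ij}+\delta_{ij}$, and bounds every one of the $2^n-1$ ``error'' products in each summand by $\no[X]_p^{n-k}\no[Y-X]_p^k \leq \no[Y-X]_p\,h_p(X)^n < \vert\det(X)\vert_p$, then invokes the ultrametric inequality to conclude $\vert\det(Y)-\det(X)\vert_p < \vert\det(X)\vert_p$. You instead factor out $X$, write $Y = X(I+Z)$ with $Z = X^{-1}(Y-X)$, and show $\no[Z]_p < 1$, so that $I+Z$ reduces to the identity modulo the maximal ideal and hence $\det(I+Z)$ is a $p$-adic unit. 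Both approaches require the trivial observation that the hypothesis forces $\det(X)\neq 0$. Your route is cleaner and more conceptual: it isolates exactly where the inequality $\no[Y-X]_p < \vert\det X\vert_p / h_p(X)^n$ is used (namely to kill the growth from $X^{-1}$), replaces the combinatorial bookkeeping by multiplicativity of the determinant, and reduces to the standard fact that a matrix congruent to $I$ modulo $p\Z_p$ has unit determinant. The paper's argument is more elementary in that it never passes through $X^{-1}$ or the ring structure of $\Z_p$, relying only on the ultrametric inequality and a raw expansion; in a setting where one did not want to invoke the inverse or Lemma \ref{INQ}(4), that would be the more self-contained option. As written, both are complete and correct.
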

\begin{proof}
Let us write $Y = (y_{ij})$ and $X = (x_{ij})$, and let $S_n$ be the symmetric group on $\{1, \ldots, n\}$.  Notice that
\[
        \begin{aligned}
            \left\vert \det(Y) - \det(X)\right\vert_p &= \left\vert\sum_{\s \in S_n}\left( \mathrm{sgn}(\s) \prod_{i = 1}^{n} y_{i\s(i)}\right) - \sum_{\s \in S_n}\left( \mathrm{sgn}(\s) \prod_{i = 1}^{n} x_{i\s(i)}\right)\right\vert_p\\
                                  &= \left\vert\sum_{\s \in S_n}\mathrm{sgn}(\s) \left(\prod_{i = 1}^{n} y_{i\s(i)} - \prod_{i = 1}^{n} x_{i\s(i)}\right)\right\vert_p.
        \end{aligned}
\]
To finish the proof,  it suffices, by the ultra-triangle inequality, to show that the $p$-adic valuation of each term in the sum is strictly less than $\vert \det(X) \vert_p$.  For the sake of brevity, we will only demonstrate the analysis for one term. The readers will find no trouble in carrying out the same argument for the other terms.

Let us consider the term $\prod_{i = 1}^{n} y_{ii} - \prod_{i = 1}^{n} x_{ii}$.  By writing $y_{ii} = x_{ii} + \delta_i$, we see that
\[\prod_{i = 1}^{n} y_{ii} - \prod_{i = 1}^{n} x_{ii} = \prod_{i = 1}^{n} (x_{ii} + \delta_i) - \prod_{i = 1}^{n} x_{ii}\]
which is a sum of $2^n-1$ terms, each being a product of $n$ numbers in $\Q_p$ whose $p$-adic valuation is smaller than
$$\Vert X \Vert_p^{n-k} \, \Vert Y - X \Vert_p^k$$
for some $1 \leq k \leq n$.   Note that $\Vert X - Y \Vert_p^k \leq \Vert X - Y \Vert_p$ for any $k \geq 1$ because $\Vert X - Y \Vert_p < 1$ as a result of Lemma \ref{INQ}(3).  Therefore,
$$\Vert X \Vert_p^{n-k}\Vert X - Y \Vert_p^k \leq \Vert X - Y \Vert_p\, h_p(X)^n < \vert \det(X) \vert_p$$
as claimed.
\end{proof}

\begin{lem}\label{WAM} Let $P$ be a finite set of primes, $d$ be a positive integer, and $0<\ve \leq 1$ be a real number.  Suppose that for every $p \in P$,  an $x_p\in \Q_{p}$ is given such that $dx_p \in \Z_p$.  Then, there exists  $z \in \Z$ such that for each $p \in P$,
\[
\left\vert\frac{z}{d}-x_p\right\vert_{p}<\ve \quad \mbox{ and } \quad 0 \leq z < \prod_{p \in P} p^{\ell_p}
\]
where $\ell_p = \left\lceil \log_{p}\left(\frac{d}{\ve}\right) \right\rceil+1$.
\end{lem}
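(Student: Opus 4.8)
The plan is to translate each $p$-adic approximation requirement $\vert z/d - x_p\vert_p < \ve$ into an ordinary congruence condition $z \equiv a_p \pmod{p^{\ell_p}}$ with $a_p \in \Z$, and then to produce $z$ by the Chinese Remainder Theorem; the bound $0 \le z < \prod_{p\in P}p^{\ell_p}$ is exactly the standard range in which the Chinese Remainder Theorem produces a unique solution, so it comes for free. This is the elementary version of the weak approximation phenomenon referred to in the introduction.

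First I would set $y_p := d x_p$, which lies in $\Z_p$ by hypothesis. Writing $v_p$ for the $p$-adic valuation, the identity $z/d - x_p = (z - y_p)/d$ gives $\vert z/d - x_p\vert_p = p^{v_p(d)}\vert z - y_p\vert_p$, so it is enough to arrange $v_p(z - y_p) \ge \ell_p$ for every $p \in P$: indeed, then $\vert z/d - x_p\vert_p \le p^{v_p(d) - \ell_p}$, and one checks this quantity is $< \ve$. The verification of this last step is where the precise shape of $\ell_p$ matters: since $d \ge p^{v_p(d)}$ we have $\log_p d \ge v_p(d)$, hence $\ell_p = \lceil \log_p(d/\ve)\rceil + 1 > \log_p(d/\ve) = \log_p d + \log_p(1/\ve) \ge v_p(d) + \log_p(1/\ve)$, which rearranges to $p^{v_p(d) - \ell_p} < \ve$. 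The same estimate also shows $\ell_p \ge 1$, since $d/\ve \ge 1$, so the moduli $p^{\ell_p}$ are nontrivial.

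It then remains to find $z \in \Z$ with $v_p(z - y_p) \ge \ell_p$ for all $p \in P$ and $0 \le z < \prod_{p\in P}p^{\ell_p}$. For each $p$ I reduce $y_p \in \Z_p$ modulo $p^{\ell_p}$ to obtain $a_p \in \{0, 1, \ldots, p^{\ell_p}-1\}$ with $v_p(y_p - a_p) \ge \ell_p$; since the integers $p^{\ell_p}$ for $p \in P$ are pairwise coprime, the Chinese Remainder Theorem yields a unique $z$ in the required range with $z \equiv a_p \pmod{p^{\ell_p}}$ for every $p$, whence $v_p(z - y_p) \ge \min\{v_p(z - a_p), v_p(a_p - y_p)\} \ge \ell_p$. (When $P = \emptyset$ one simply takes $z = 0$.) I do not anticipate a genuine obstacle here; the only points needing care are the two elementary inequalities involving $\ell_p$ above---in particular retaining the strict inequality $\vert z/d - x_p\vert_p < \ve$, which is exactly what forces the extra $+1$ in the definition of $\ell_p$.
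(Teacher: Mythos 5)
Your proof is correct and follows essentially the same strategy as the paper's: approximate $dx_p$ by an integer residue modulo $p^{\ell_p}$, combine via the Chinese Remainder Theorem, and use the bound $d \ge p^{v_p(d)}$ together with the definition of $\ell_p$ to convert the congruence into the desired strict $p$-adic inequality. The only cosmetic difference is that you take $a_p$ directly as the reduction of $dx_p$ modulo $p^{\ell_p}$, whereas the paper first invokes density of $\Z$ in $\Z_p$ to pick $z_p$ with $\vert dx_p - z_p\vert_p < \ve/d$ and then applies CRT to those $z_p$.
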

\begin{proof}
Since $\Z$ is dense in each $\Z_{p}$, there exists $z_p \in \Z$ such that
\[
\left\vert dx_p - z_p\right\vert_{p}<\frac{\ve}{d}, \qquad \forall\, p \in P.
\]
By the Chinese Reminder Theorem, there exists an integer $z$ such that  for each $p \in P$,
\[
z \equiv z_p \mod{p^{\ell_p}}, \qquad \mbox{ and } \qquad 0 \leq z < \prod_{p\in P} p^{\ell_p}.
\]
Moreover, since $p^{\ell_p}\,\frac{\ve}{d} > 1$,
\[
\begin{aligned}
\vert z-dx_p\vert_{p} &= \vert z- z_p + z_p - dx_p\vert_{p}\\
                                                    &\leq \max\{\vert z-z_p\vert_{p}, \vert z_p - dx_p\vert_{p}\}\\
                                                    &< \frac{\ve}{d}.
 \end{aligned}
\]
Then, since $d\vert d \vert_p \geq 1$, $\left\vert\frac{z}{d}-x_p\right\vert_{p}<\ve$ as claimed.
\end{proof}

The following lemma is essentially \cite[Lemma 15]{SI} but we draw a different conclusion at the end of its proof.

\begin{lem}\label{Siegel1}
Let $A \in \GL_n(\Q_p)$.  There exists at least one diagonal matrix $E$ whose diagonal entries are $1$ or $-1$ such that
\[
\vert \det(A-E)\vert_p \geq \vert 2^n \cdot \det(A)\vert_p.
\]
\end{lem}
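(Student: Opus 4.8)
The plan is to exploit the fact that the determinant is an affine function of each diagonal entry that gets modified. Regard $\det(A-D)$ as a function of the diagonal matrix $D=\diag(x_1,\dots,x_n)$ with indeterminate entries $x_1,\dots,x_n$. Since $x_i$ occurs only in the $i$-th row of $A-D$, and the determinant is linear in each row, the polynomial $P(x_1,\dots,x_n):=\det(A-\diag(x_1,\dots,x_n))$ has degree at most $1$ in each $x_i$ separately. Hence it has the form
\[
\det\bigl(A-\diag(x_1,\dots,x_n)\bigr)=\sum_{S\subseteq\{1,\dots,n\}}c_S\prod_{i\in S}x_i,\qquad c_S\in\Q_p,
\]
and specializing all $x_i=0$ shows that $c_\emptyset=\det(A)$.

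Next I would sum this identity over the $2^n$ sign vectors $\e=(\e_1,\dots,\e_n)\in\{\pm1\}^n$, writing $E_\e=\diag(\e_1,\dots,\e_n)$. For any nonempty $S$ one has $\sum_{\e\in\{\pm1\}^n}\prod_{i\in S}\e_i=2^{\,n-|S|}\prod_{i\in S}\bigl(\sum_{\e_i=\pm1}\e_i\bigr)=0$, so only the constant term $c_\emptyset$ survives the summation and
\[
\sum_{\e\in\{\pm1\}^n}\det(A-E_\e)=2^n\,c_\emptyset=2^n\det(A).
\]

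Finally I would apply the ultrametric inequality (Lemma \ref{INQ}(1), viewing these scalars as $1\times1$ matrices) to this last display: $\vert 2^n\det(A)\vert_p\le\max_{\e}\vert\det(A-E_\e)\vert_p$. Consequently at least one sign matrix $E=E_\e$ satisfies $\vert\det(A-E)\vert_p\ge\vert2^n\det(A)\vert_p$, which is exactly the assertion. I do not expect a genuine obstacle here; the only points needing care are the verification that $P$ is multi-affine in the diagonal variables and the elementary bookkeeping showing that every nonconstant monomial cancels upon summing over $\{\pm1\}^n$. The passage from the summation identity to the existence statement is a single application of the non-archimedean triangle inequality.
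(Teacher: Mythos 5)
Your proof is correct and takes essentially the same approach as the paper: both rest on the observation that $\det(A-\diag(x_1,\dots,x_n))$ is multi-affine in the diagonal variables, deduce the identity $\sum_{E\in\mathcal E}\det(A-E)=2^n\det(A)$ by summing over all sign choices, and finish with the ultrametric inequality. The paper phrases the cancellation step via a parity argument (the summed function is both linear and even in each variable, hence constant) rather than your explicit monomial expansion, but this is only a cosmetic difference.
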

\begin{proof}
Let $D$ be the diagonal matrix with indeterminate diagonal entries $\lambda_1, \lambda_2, \cdots, \lambda_n$. The determinant $\det(A - D)$ is a linear function of any of the $\lambda_k (k = 1,2...,n)$ and the same holds for the function
\be\label{TLF}
T(\lambda_1, \lambda_2, \cdots, \lambda_{n}) := \sum_{\Lambda \in \mathcal{E}} \det(A - \Lambda D),
\ee
where $\mathcal E$ is the group of $n\times n$ diagonal matrices with $\pm 1$ as the diagonal entries.  If $D$ is replaced by $\Lambda D$, for any $\Lambda \in \mathcal{E}$, the function $T$ is not changed.  Consequently $T$ is an even function of any of the variables $\lambda_k (k = 1,2...,n)$.  This proves that $T$ is a constant.  By taking in particular $D = I_n$ and $\bo$ in \eqref{TLF}, we obtain
\[
\left\vert\sum_{\Lambda \in \mathcal{E}} \det(A-\Lambda)\right\vert_p = \left\vert2^{n} \cdot \det(A)\right\vert_p.
\]
Therefore, there must be at least one $E \in \mathcal E$ such that $\vert\det(A-E)\vert_p \geq \vert 2^n \cdot \det(A)\vert_p$.
\end{proof}

For any $\bxi \in \Q^m$, the homogeneous height of $\bxi$ is
$$H(\bxi): = \Vert \bxi \Vert \prod_{p} \Vert \bxi \Vert_p$$
where $\Vert \bxi \Vert$ is the sup-norm of $\bxi$.  The inhomogeneous height of $\bxi$ is defined as
$$h(\bxi): = H((1, \bxi)) = \Vert (\bxi, 1) \Vert \prod_{p} h_p(\bxi).$$
It is not hard to see that for any positive number $C$, there are only finitely many  $\bxi \in \Q^m$ such that $h(\bxi) \leq C$ (the Northcott Property).   For any $m\times n$ matrix $A$ over $\Q$, $H(A)$ and $h(A)$ are defined by viewing $A$ as a vector in $\Q^{mn}$.

Let $F$ and $G$ be two $n$-ary regular quadratic forms over $\Q$.  There are two matrices  determined by $F$ and $G$ that will be taken as input data in Theorem \ref{main_theorem}:
\begin{enumerate}

\item[(i)] A matrix $\Sigma \in \GL_n(\Q)$ such that $\Sigma'F\Sigma$ is diagonal.

The columns $\bx_1, \ldots, \bx_n$ of $\Sigma$ form an orthogonal basis of $\Q^n$ with respect to the bilinear form induced by $F$.  Finding such an orthogonal basis is fairly straightforward.  We first pick a vector $\bx_1$ such that $F(\bx_1) \neq 0$.  The second vector $\bx_2$ must be in the orthogonal complement of $\bx_1$ and hence it is in the solution space of the homogenous system $\bx'F\bx_1 = \bo$.  We may continue this process to find the other $\bx_i$.

\item[(ii)] A matrix $\sigma \in \GL_n(\Q)$ such that $\sigma' F \sigma = G$.

By (i), we may assume that $G$ is already diagonalized, say $G = \diag(b_1, \ldots, b_n)$.  Since $G$ is regular, each $b_i$ is nonzero.  Finding $\sigma$ is tantamount to finding an orthogonal basis $\bt_1, \ldots, \bt_n$ of $\Q^n$ with respect to the symmetric bilinear form induced by $F$ such that $F(\bt_i) = b_i$ for $1 \leq i \leq n$.  Let $F_{b_1}$ be the $(n+1)$-ary quadratic form $F(x_1, \ldots, x_n) - b_1x_{n+1}^2$.  A theorem of Masser \cite{M} shows that there must be a vector $\bt_1 \in \Q^n$ such that $F(\bt_1) = b_1$ and
$$h(\bt_1) \leq 3^{\frac{n+1}{2}}\, n^{n+1}\, H(F_{b_1})^{\frac{n+1}{2}}.$$
As we mentioned earlier, there are only finitely many vectors in $\Q^n$ whose inhomogeneous height satisfy this inequality.  This gives us an effective procedure to find $\bt_1$.

The second vector $\bt_2$ must be in the orthogonal complement of $\bt_1$, which is the solution space of the homogeneous system $\bx'F\bt_1 = \bo$.  Take a basis $\bv_2, \ldots, \bv_n$ of this subspace, and let $T$ be the matrix whose columns are these $n-1$ vectors.  We may then apply Masser's theorem to the quadratic form $T'FT$ and find an explicit search bound for the inhomogeneous height of $\bt_2$.  Once again we have an effective procedure to find $\bt_2$.  By continuing this process in the obvious manner we should be able to find the other $\bt_i$.
\end{enumerate}

As in the proof of Lemma \ref{Siegel1}, we let $\mathcal E$ be the group of diagonal matrices with diagonal entries $1$ or $-1$.

\begin{lem}\label{Siegel3}
Let $F$ and $G$ be $n$-ary regular rationally equivalent quadratic forms over $\Q$.  Let $\sigma, \Sigma\in \GL_n(\Q)$ be such that
$\s'F\s = G$ and $\Sigma'F\Sigma$ is diagonal.  Then,
\begin{enumerate}
\item For any $E \in \mathcal{E}$,
\[
(\Sigma E\Sigma^{-1}\s)'F(\Sigma E\Sigma^{-1}\s) = G.
\]

\item There exists a matrix $\tau \in \{\Sigma E\Sigma^{-1}\s : E \in \mathcal{E}\}$ satisfying the following properties.  Suppose that $F$ and $G$ are equivalent over $\Z_p$.   Then there exists a $\tau_{p} \in \GL_n(\Z_{p})$ such that $\tau_{p}'F\tau_{p} = G$, that $\det(\tau - \tau_{p}) \neq 0$, and that
\[
\no[(\tau - \tau_{p})^{-1}]_p \leq h_p(\tau)^{n-1}\cdot\max\left\{\frac{1}{\vert 2^{n}\vert_{p}}, \frac{1}{\vert \det(\s)\vert_{p}}\right\}.
\]
\end{enumerate}
\end{lem}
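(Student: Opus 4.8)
The plan is to treat the two parts in turn, part (1) being a one-line computation and part (2) the place where Lemma \ref{Siegel1} enters. For part (1), I would set $D := \Sigma'F\Sigma$, which is diagonal by hypothesis; since $E$ is diagonal with $E^2 = I_n$ it commutes with $D$ and $EDE = D$, so
\[
(\Sigma E\Sigma^{-1})'F(\Sigma E\Sigma^{-1}) = (\Sigma^{-1})'\,E D E\,\Sigma^{-1} = (\Sigma^{-1})'\,D\,\Sigma^{-1} = F .
\]
Thus $\Sigma E\Sigma^{-1}$ is an isometry of $F$ over $\Q$, and therefore $(\Sigma E\Sigma^{-1}\s)'F(\Sigma E\Sigma^{-1}\s) = \s'F\s = G$.

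For part (2) I would fix the prime $p$ and assume $F$ and $G$ are equivalent over $\Z_p$ (otherwise there is nothing to prove), constructing $\tau$ and $\tau_p$ together. Fix any $\tau_p \in \GL_n(\Z_p)$ with $\tau_p'F\tau_p = G$. Two preliminary remarks will be used: comparing determinants in $\tau_p'F\tau_p = G = \s'F\s$ and using $\det(\tau_p) \in \Z_p^{\times}$ shows $\vert\det(\s)\vert_p = 1$, so the bound to be proved is simply $h_p(\tau)^{n-1}\vert 2^n\vert_p^{-1}$; and $\no[\tau_p]_p \leq 1$ since $\tau_p$ has entries in $\Z_p$. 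Now put $A := \s\tau_p^{-1}$; a short manipulation of $\tau_p'F\tau_p = \s'F\s$ gives $A'FA = F$, and $\vert\det(A)\vert_p = \vert\det(\s)\vert_p = 1$. I would then pass to $B := \Sigma^{-1}A\Sigma$, which still has $\vert\det(B)\vert_p = 1$, and apply Lemma \ref{Siegel1} to $B$: there is $E \in \mathcal E$ with $\vert\det(B - E)\vert_p \geq \vert 2^n\det(B)\vert_p = \vert 2^n\vert_p > 0$. Take $\tau := \Sigma E\Sigma^{-1}\s$, so $\tau'F\tau = G$ by part (1). Using $\s = A\tau_p$ and $E^{-1} = E$, I would rewrite
\[
\tau - \tau_p = (\Sigma E\Sigma^{-1}A - I_n)\,\tau_p = \Sigma\, E\,(B - E)\,\Sigma^{-1}\tau_p .
\]
Since $\det(\Sigma)$, $\det(\Sigma^{-1})$, $\det(E)$ and $\det(\tau_p)$ all have $p$-adic absolute value $1$, this yields $\det(\tau - \tau_p) \neq 0$ and $\vert\det(\tau - \tau_p)\vert_p = \vert\det(B - E)\vert_p \geq \vert 2^n\vert_p$. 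Finally, Lemma \ref{INQ}(1) gives $\no[\tau - \tau_p]_p \leq \max\{\no[\tau]_p, \no[\tau_p]_p\} \leq h_p(\tau)$, and then Lemma \ref{INQ}(4) produces
\[
\no[(\tau - \tau_p)^{-1}]_p \leq \frac{\no[\tau - \tau_p]_p^{\,n-1}}{\vert\det(\tau - \tau_p)\vert_p} \leq \frac{h_p(\tau)^{n-1}}{\vert 2^n\vert_p} = h_p(\tau)^{n-1}\max\left\{\frac{1}{\vert 2^n\vert_p},\ \frac{1}{\vert\det(\s)\vert_p}\right\} ,
\]
as required.

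The step I expect to demand the most care is the algebraic normalization putting $\det(\tau - \tau_p)$ into the precise shape ``$\det(B - E)$ times unit determinants'' to which Lemma \ref{Siegel1} applies: one must conjugate the defect isometry $A = \s\tau_p^{-1}$ by $\Sigma^{-1}(\,\cdot\,)\Sigma$ --- the coordinate change that diagonalizes $F$ --- and keep the outer factors $\Sigma E$ and $\Sigma^{-1}\tau_p$ in the right positions so that no $p$-adic denominator leaks into the determinant estimate. A smaller but easy-to-miss point is that the hypothesis that $F$ and $G$ are equivalent over $\Z_p$ forces $\vert\det(\s)\vert_p = 1$; without recording this, the naive bound $h_p(\tau)^{n-1}/(\vert 2^n\vert_p\,\vert\det(\s)\vert_p)$ would not obviously coincide with the $\max$ appearing in the statement.
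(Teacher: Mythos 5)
Your proof of part (1) is correct and matches the paper's intent, and your algebraic manipulations in part (2) are sound as far as they go. In particular, the identity $\tau - \tau_p = \Sigma E (B-E)\Sigma^{-1}\tau_p$ is the right reduction, and the observation that the $\Z_p$-equivalence of $F$ and $G$ forces $\vert\det(\s)\vert_p = 1$ (via $\det(\tau_p)^2 = \det(\s)^2$ and $\det(\tau_p)\in\Z_p^\times$) is correct and even shows that the $\max$ in the statement is always $1/\vert 2^n\vert_p$ --- a simplification the authors do not record. A small slip: $\vert\det(\Sigma)\vert_p$ need not be $1$; only the product $\vert\det(\Sigma)\vert_p\,\vert\det(\Sigma^{-1})\vert_p = 1$ holds, which is what your computation actually needs.

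The genuine gap is a quantifier error. The lemma asserts a \emph{single} matrix $\tau\in\{\Sigma E\Sigma^{-1}\s : E\in\mathcal E\}$ that works simultaneously for \emph{every} prime $p$ at which $F$ and $G$ are $\Z_p$-equivalent; this is exactly how Theorem \ref{main_theorem} consumes it (``Let $\tau\in\GL_n(\Q)$ and $\tau_p\in\GL_n(\Z_p)$, $p\in P$, be matrices satisfying Lemma \ref{Siegel3}''). Your construction fixes $p$ first and then chooses $E = E(p)$ via Lemma \ref{Siegel1}, so your $\tau$ depends on $p$. Nothing in your argument explains why a single $E\in\mathcal E$ can be made to serve for all such $p$, and in general it cannot: Lemma \ref{Siegel1} applied to $B = \Sigma^{-1}\s\tau_p^{-1}\Sigma$ for different $p$ will yield different $E$'s. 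The paper circumvents this by committing $\tau$ only once, using the smallest relevant prime $p_0$ exactly as you do, and then for every $p>p_0$ (so $p>2$) producing a \emph{new} $\tau_p$ rather than a new $\tau$: since $p$ is odd, $F$ admits a $\Z_p$-diagonalization $\Sigma_p\in\GL_n(\Z_p)$, and applying Lemma \ref{Siegel1} to $A_p := \Sigma_p^{-1}\tau\s_p^{-1}\Sigma_p$ gives an $E_p$ with which $\tau_p := \Sigma_p E_p\Sigma_p^{-1}\s_p\in\GL_n(\Z_p)$ satisfies the required bound while $\tau$ stays fixed. Your argument reproduces the $p=p_0$ step but is silent on the $p>p_0$ step, which is where the second branch of the $\max$ (and the need for $p$ odd) actually comes from.
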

\begin{proof} Part (1) follows from direct computation, which we leave the detail to the readers. For part (2), let $\mathrm{Equiv}(F, G)$ denote the set of all prime numbers $p$ for which $F$ and $G$ are equivalent over $\Z_p$, and let $p_0$ be the smallest element in $\mathrm{Equiv}(F, G)$. Let $\s_0$ be any matrix in $\GL_n(\Z_{p_0})$ satisfying
\[
\s_0'F\s_0 = G.
\]
Applying Lemma \ref{Siegel1} to $A_{0}:= \Sigma^{-1}\s_0\s^{-1}\Sigma$, we get that there exists an $E_0 \in \mathcal{E}$ such that
\be\label{pp0}
\vert\det(A_{0} - E_{0})\vert_{p_0} \geq \vert 2^n \cdot \det(A_0)\vert_{p_0}.
\ee
We shall prove that $\tau := \Sigma E_0\Sigma^{-1}\s \in \GL_n(\Q)$ satisfies our lemma.

First, for $p=p_0$, we take $\tau_{p_0}=\sigma_0$. Since $\s_0 \in \GL_n(\Z_{p_0})$, we have $\vert\det(\sigma_0)\vert_{p_0}=1$.  This fact and \eqref{pp0} give us
\[
\begin{aligned}
            \vert\det(\tau - \s_0)\vert_{p_0} &= \vert\det(\Sigma(A_0-E_0)\Sigma^{-1}\s)\vert_{p_0}\\
                                        &= \vert\det(\Sigma)\vert_{p_0}\, \vert\det(A_0-E_0)\vert_{p_0}\, \vert\det(\Sigma^{-1})\vert_{p_0}\, \vert\det(\s)\vert_{p_0}\\
                                        &\geq \vert 2^n \cdot \det(A_0)\vert_{p_0}\, \vert\det(\s)\vert_{p_0}\\
                                        &= \vert 2^n \cdot \det(\s) \cdot \det(\s_0) \cdot \det(\s^{-1})\vert_{p_0} \\
                                        &= \vert 2^n\vert_{p_0}.
\end{aligned}
\]
Therefore, as $\Vert \sigma_0 \Vert_{p_0} = 1$,
\[
\begin{aligned}
            \no[(\tau - \s_0)^{-1}]_{p_0} &\leq \frac{1}{\vert\det(\tau - \sigma_0)\vert_{p_0}} \no[\tau - \sigma_0]_{p_0}^{n-1}\\
                                              &\leq \frac{h_{p_0}(\tau)^{n-1}}{\vert 2^{n}\vert_{p_0}}.
\end{aligned}
\]
This proves that $\tau_{p_0}=\sigma_0$ satisfies the lemma.

Next we consider the case for $p\in\mathrm{Equiv}(A, B)$ with $p>p_0$. Then, plainly, $p>2$. By \cite[Page 115]{CA} or \cite[\S 92]{OM}, $F$ is equivalent to a diagonal quadratic form over $\Z_p$.  Thus, there exists a $\Sigma_p \in \GL_n(\Z_{p})$ such that $F_p: = \Sigma_p'F\Sigma_p$ is diagonal.  Let $\s_p$ be any matrix in $\GL_n(\Z_p)$ which satisfies
\[
        \s_p'F\s_p = G.
\]
By applying Lemma \ref{Siegel1} to $A_{p} := \Sigma_p^{-1}\tau\s_p^{-1}\Sigma_p$,  we get that there exists an $E_p \in \mathcal{E}$ such that
\[
\vert \det(A_{p} - E_{p})\vert_{p} \geq \vert 2^n \cdot \det(A_p)\vert_{p} = \vert \det(A_p)\vert_p.
\]
Let $\tau_p := \Sigma_pE_p\Sigma_p^{-1}\s_p$. Clearly, $\tau_p\in\GL_n(\Z_p)$. Since $E_p'F_pE_p = F_p$, we have $\tau_p'F\tau_p = G$. Notice that
\[
\begin{aligned}
            \vert\det(\tau - \tau_p)\vert_{p} &= \vert\det(\Sigma_p(A_p-E_p)\Sigma_p^{-1}\s_p)\vert_{p}\\
                                      &= \vert \det{\Sigma_p}\vert_{p}\, \vert\det(A_p-E_p)\vert_{p}\, \vert\det(\Sigma_p^{-1})\vert_{p}\, \vert \det(\s_p)\vert_{p}\\
                                      &\geq \vert \det(A_p)\vert_{p}\\
                                      &= \vert \det(\tau) \cdot \det(\s_p^{-1})\vert_{p}\\
                                      &= \vert \det(\s)\vert_{p}.
\end{aligned}
\]
 Hence we have
 \[
 \begin{aligned}
            \no[(\tau - \tau_p)^{-1}]_p &\leq \frac{1}{\vert \det(\tau - \tau_p)\vert_{p}} \no[(\tau - \tau_p)]_p^{n-1}\\
                                          &\leq \frac{h_p(\tau)^{n-1}}{\vert\det{\s}\vert_{p}}.
 \end{aligned}
 \]
 This implies that our choice of $\tau_p$ satisfies the lemma.
\end{proof}

Let $Q$ be an $n$-ary regular quadratic form over a field $K$ of characteristic $\neq 2$.  Let $\mathrm{O}_Q$ be the orthogonal group of $Q$ and $\textrm{Skew}_n$ be the set of $n\times n$ skew-symmetric matrices, both viewed as algebraic groups over $K$.  Let $L$ be an extension of $K$.  If $U \in \textrm{Skew}_n(L)$ such that $\det(U+Q) \neq 0$, then
\be\label{CT}
\mu := (U+Q)^{-1}(U-Q)
\ee
is an element in $\mathrm{O}_Q(L)$ with $\det(I_n-\mu) \neq 0$.  The map $U \longmapsto \mu$ is called the {\em Cayley Transformation} (or {\em Cayley-Dickson Parametrization}), which is a birational $K$-isomorphism from $\textrm{Skew}_n$ to $\mathrm{O}_Q$.   For any $\mu$ in $\mathrm{O}_Q(L)$ with $\det(I_n-\mu) \neq 0$, the inverse of this Cayley Transformation is defined and given by $\mu \longmapsto U = 2Q(I_n-\mu)^{-1}-Q$.  This $U$ is in $\textrm{Skew}_n(L)$ such that $\det(U+Q)\neq 0$ and \eqref{CT} holds.  The readers may consult \cite[Lemma 16]{SI} and \cite[Proposition 7.4]{PR} for more on these properties.

\section{ Main Theorem}\label{main}

Let $F$ and $G$ be two $n$-ary regular quadratic forms over $\Q$ which are equivalent over $\Q$.  Let $\Sigma$ and $\sigma$ be the rational matrices, as constructed in Section \ref{nota}, which have the properties that
\begin{equation} \label{k_sigma}
\Sigma' F \Sigma \mbox{ is diagonal } \qquad \mbox{ and } \qquad \sigma' F \sigma = G.
\end{equation}
Let $P$ be a finite set of primes.    For each $p \in P$, let
\begin{equation*} \label{kappap}
\kappa_p: = \max\{ h_p(\Sigma E \Sigma^{-1} \sigma) : E \in \mathcal E\},
\end{equation*}
where $\mathcal E$ is the group of diagonal matrices with diagonal entries $1$ or $-1$,
\begin{equation*} \label{alphap}
\alpha_p := \max\left\{\no[F]_{p}\,\kappa_p^{n}\,\max\left\{\frac{1}{\vert 2^{n}\vert_{p}}, \frac{1}{\vert \det{\s}\vert_{p}}\right\}, 1 \right\},
\end{equation*}
\begin{equation*} \label{betap}
\beta_p := \frac{\vert 2^n \det(F) \det(\s)\vert_{p}}{\kappa_p^{n}},
\end{equation*}
and
\be \label{ep}
\ve := \min_{p \in P}\left\{\frac{\beta_p}{\kappa_p\alpha_p^{n}}\right\}.
\ee
We define two positive constants, depending only on $n, \sigma, \Sigma, F$, and $P$, by
\be\label{dC}
d = \prod_{p \in P}\alpha_p, \qquad C = \prod_{p \in P} p^{\lceil \log_{p}\left(\frac{d}{\ve}\right)\rceil + 1}.
\ee
Recall that $\Z^P$ is the ring $\bigcap_{p\in P} (\Z_p\cap \Q)$.

\begin{thm} \label{main_theorem}
Let $P$ be a finite set of primes. Let $F$ and $G$ be $n$-ary regular quadratic forms over $\Q$.  Suppose that $F$ and $G$ are equivalent over $\Q$ and over  $\Z_{p}$ for each $p \in P$.   Let $\s, \Sigma \in \GL_n(\Q)$ be as in \eqref{k_sigma}, and $d, C>0$ be given by \eqref{dC}.  Then there exists a matrix
\[
\hat{\tau} \in \left\{(U+F)^{-1}(U-F)\Sigma E\Sigma^{-1}\s : E \in \mathcal{E}, U \in \frac{1}{d}\cdot\textnormal{Skew}_n\left(\Z\right), \no[U] \leq \frac{C}{d}\right\}
\]
such that
\[
\hat{\tau} \in \GL_n(\Z^{P}), \qquad \hat{\tau}'F\hat{\tau} = G.
\]

\end{thm}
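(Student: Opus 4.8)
The plan is to combine Lemma~\ref{Siegel3}, the inverse Cayley transformation, and the weak-approximation Lemma~\ref{WAM}, the idea being to approximate, simultaneously at all primes of $P$, a $p$-adic integral equivalence by a rational one of the prescribed shape. First I would invoke Lemma~\ref{Siegel3} to fix $E_0\in\mathcal E$ so that $\tau:=\Sigma E_0\Sigma^{-1}\sigma$ satisfies $\tau'F\tau=G$ and, for every $p\in P$, admits $\tau_p\in\GL_n(\Z_p)$ with $\tau_p'F\tau_p=G$, $\det(\tau-\tau_p)\neq 0$, and $\Vert(\tau-\tau_p)^{-1}\Vert_p\le\kappa_p^{n-1}\max\{\vert 2^n\vert_p^{-1},\vert\det\sigma\vert_p^{-1}\}$. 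Since $\tau$ and $\tau_p$ both carry $F$ to $G$, the matrix $\mu_p:=\tau_p\tau^{-1}$ lies in $\mathrm{O}_F(\Q_p)$, and $I-\mu_p=(\tau-\tau_p)\tau^{-1}$ is invertible; hence the inverse Cayley transformation produces a skew-symmetric $U_p:=2F(I-\mu_p)^{-1}-F$ over $\Q_p$ with $\mu_p=(U_p+F)^{-1}(U_p-F)$ and $\det(U_p+F)\neq 0$.

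Next I would extract the two $p$-adic size bounds used downstream. From $U_p+F=2F(I-\mu_p)^{-1}$, Lemma~\ref{INQ}, and the bound on $\Vert(\tau-\tau_p)^{-1}\Vert_p$ one gets $\Vert U_p\Vert_p\le\alpha_p$; and from $\det(U_p+F)=\pm 2^n\det(F)\det(\sigma)/\det(\tau-\tau_p)$ together with $\vert\det(\tau-\tau_p)\vert_p\le\Vert\tau-\tau_p\Vert_p^n\le\kappa_p^n$ one gets $\vert\det(U_p+F)\vert_p\ge\beta_p$. Each $\alpha_p$ is a nonnegative integral power of $p$, so $d=\prod_{p\in P}\alpha_p\in\Z$ and $\Vert U_p\Vert_p\le\alpha_p$ forces $d(U_p)_{ij}\in\Z_p$. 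A short calculation — bounding one factor of $\alpha_p^n$ from below by $\Vert F\Vert_p\kappa_p^n/\vert 2^n\vert_p$ and the remaining $n-1$ factors by $\Vert F\Vert_p\kappa_p^n/\vert\det\sigma\vert_p$, then using $\vert\det F\vert_p\le\Vert F\Vert_p^n$ and $\vert\det\sigma\vert_p\le\kappa_p^n$ — shows $\beta_p/(\kappa_p\alpha_p^n)\le\kappa_p^{-(n+1)}\le 1$ for each $p$, so $0<\ve\le 1$ and Lemma~\ref{WAM} applies. Feeding the $\binom{n}{2}$ super-diagonal entries of the family $(U_p)_{p\in P}$ into Lemma~\ref{WAM} and extending skew-symmetrically then yields a rational $U\in\frac{1}{d}\textnormal{Skew}_n(\Z)$ with $\Vert U\Vert\le C/d$ and $\Vert U-U_p\Vert_p<\ve$ for every $p\in P$.

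Now put $\hat\tau:=(U+F)^{-1}(U-F)\Sigma E_0\Sigma^{-1}\sigma$. Since $h_p(U_p+F)\le\alpha_p$, we have $\Vert U-U_p\Vert_p<\ve\le\beta_p/(\kappa_p\alpha_p^n)\le\vert\det(U_p+F)\vert_p/h_p(U_p+F)^n$, so Lemma~\ref{DET} gives $\vert\det(U+F)\vert_p=\vert\det(U_p+F)\vert_p\neq 0$ for every $p\in P$; in particular $\det(U+F)\neq 0$, the Cayley transformation places $\mu:=(U+F)^{-1}(U-F)$ in $\mathrm{O}_F(\Q)$, and therefore $\hat\tau'F\hat\tau=\tau'(\mu'F\mu)\tau=\tau'F\tau=G$.

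The remaining step — and the one I expect to be the real obstacle — is to verify $\hat\tau\in\GL_n(\Z^P)$. For this I would use the identity
\[
\hat\tau-\tau_p=(U+F)^{-1}(U-U_p)(\tau-\tau_p),
\]
obtained by expanding $(U+F)^{-1}(U-F)-(U_p+F)^{-1}(U_p-F)=2(U+F)^{-1}(U-U_p)(U_p+F)^{-1}F$ and then simplifying $2(U_p+F)^{-1}F\tau=(I-\mu_p)\tau=\tau-\tau_p$. Combined with $\Vert(U+F)^{-1}\Vert_p\le\Vert U+F\Vert_p^{n-1}/\vert\det(U+F)\vert_p\le\alpha_p^{n-1}/\beta_p$, with $\Vert\tau-\tau_p\Vert_p\le\kappa_p$, and with $\Vert U-U_p\Vert_p<\ve\le\beta_p/(\kappa_p\alpha_p^n)$, this gives $\Vert\hat\tau-\tau_p\Vert_p<1/\alpha_p\le 1$. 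As $\tau_p\in\GL_n(\Z_p)$ has $\Vert\tau_p\Vert_p\le 1$, $h_p(\tau_p)=1$, and $\vert\det\tau_p\vert_p=1$, it follows that $\Vert\hat\tau\Vert_p\le 1$ and, applying Lemma~\ref{DET} with $X=\tau_p$, that $\vert\det\hat\tau\vert_p=1$, for every $p\in P$; hence $\hat\tau\in\GL_n(\Z^P)$. Finally $E_0\in\mathcal E$ and $U$ has the prescribed shape and size, so $\hat\tau$ lies in the displayed set, completing the argument. The delicate part throughout is the calibration of $\alpha_p,\beta_p,\kappa_p$ and $\ve$ in \eqref{ep}: all the nested ultrametric estimates must close up so that precisely $\Vert\hat\tau-\tau_p\Vert_p<1$ falls out, which is exactly why the economical identity $\hat\tau-\tau_p=(U+F)^{-1}(U-U_p)(\tau-\tau_p)$ is needed in place of a cruder expansion of the Cayley difference.
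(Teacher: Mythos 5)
Your proof is correct and follows essentially the same strategy as the paper: Lemma \ref{Siegel3} to produce $\tau$ and the local $\tau_p$, the inverse Cayley transformation to pass to skew-symmetric $U_p$, the size estimates $\Vert U_p\Vert_p\le\alpha_p$ and $\vert\det(U_p+F)\vert_p\ge\beta_p$, weak approximation (Lemma \ref{WAM}) on the off-diagonal entries, Lemma \ref{DET} to keep $\det(U+F)\neq 0$, and the identity $\hat\tau-\tau_p=(U+F)^{-1}(U-U_p)(\tau-\tau_p)$ to close the ultrametric estimate. Two small differences, both harmless: you establish $\ve\le 1$ by a direct computation from the definitions of $\alpha_p,\beta_p,\kappa_p$, whereas the paper gets it more quickly from $\beta_p/\alpha_p^n\le\vert\det(U_p+F)\vert_p/h_p(U_p+F)^n\le 1$; and you are slightly more careful at the very end, invoking Lemma \ref{DET} with $X=\tau_p$ to force $\vert\det\hat\tau\vert_p=1$, where the paper silently relies on the determinant relation coming from $\hat\tau'F\hat\tau=\tau_p'F\tau_p=G$.
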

\begin{proof}
Let $\tau \in \GL_n(\Q)$ and $\tau_{p} \in \GL_n(\Z_{p})$, $p \in P$,  be matrices satisfying Lemma \ref{Siegel3}.  We have $\tau = \Sigma E \Sigma^{-1}\sigma$ for some $E \in \mathcal E$ and
\be\label{TSTI1}
\tau' F\tau = G, \qquad \tau_{p}'F\tau_{p} = G, \qquad \tau - \tau_{p}\in \GL_n(\Q_p)
\ee
and
\be\label{TSTI2}
\no[(\tau - \tau_{p})^{-1}]_{p} \leq
        \kappa_p^{n-1}\max\left\{\frac{1}{\vert 2^{n}\vert_{p}}, \frac{1}{\vert\det{\s}\vert_{p}}\right\}.
\ee

A straightforward computation shows that $\tau_p\tau^{-1}$ is in $\mathrm{O}_F(\Q_p)$.   Moreover, by \eqref{TSTI1}, $\det(I_n - \tau_{p}\tau^{-1}) \neq 0$.  Therefore, we may apply the inverse of the Cayley Transformation to $\tau_{p}\tau^{-1}$ and write   \be\label{CT1}
        \tau_{p} = (U_{p}+F)^{-1}(U_{p}-F) \tau,
\ee
where $U_p = 2F(I_n - \tau_p\tau^{-1})^{-1} - F \in \textrm{Skew}_n(\Q_{p})$.

The next step is to find a $U \in \textrm{Skew}_n(\Q)$ such that
\be\label{detu+a}
   U \in \frac{1}{d}\cdot\textrm{Skew}_n(\Z), \qquad \Vert U \Vert \leq \frac{C}{d}, \qquad      \det(U+F)\neq 0,
\ee
and that the matrix
\be\label{HATTAU}
        \hat{\tau}:= (U+F)^{-1}(U-F)\tau \in \GL_n(\Q)
\ee
satisfies
\be\label{TSTI3}
        \no[\hat{\tau} - \tau_p]_{p} \leq 1.
\ee
This implies that $\hat{\tau}\in\GL_n(\Z_{p})$ for all $p \in P$; hence $\hat{\tau} \in \GL_n(\Z^P)$.   Moreover, since $\hat{\tau}\tau^{-1} \in \textrm{O}_F(\Q)$ because it is the image of $U$ under the Cayley Transformation.  Thus, $\hat{\tau}' F \hat{\tau} = G$ and this will finish the proof of the theorem.

To obtain \eqref{detu+a}, we will apply Lemma \ref{DET} to $X = U_p + F$ and $Y = U + F$, and Lemma \ref{WAM} to $U_p, \ve$ and $d$.  First of all,  we obtain from (\ref{CT1}) that
\be\label{UI}
        U_p + F = F (\tau + \tau_p) (\tau - \tau_p)^{-1} + F = F((\tau + \tau_p) (\tau - \tau_p)^{-1} + I_n).
\ee
Since $\tau_p \in \GL_n{(\Z_{p})}$, we have $\no[\tau_p]_{p} = 1$.  Then, by applying (\ref{TSTI2}) to (\ref{UI}), we get that
\[
        \begin{aligned}
            \no[U_p+F]_{p} &\leq \no[F]_{p} \max\left\{\no[\tau + \tau_p]_{p} \no[(\tau - \tau_p)^{-1}]_{p}, 1\right\}\\
                         &\leq \no[F]_{p}\, h_p(\tau)\, \kappa_p^{n-1}\, \max\left\{\frac{1}{\vert 2^{n}\vert_{p}}, \frac{1}{\vert \det{\s}\vert_{p}}\right\}\\
                         &\leq \alpha_p,
        \end{aligned}
\]
and hence $h_p(U_p + F) \leq \alpha_p$ because $1 \leq \alpha_p$.  Note that for the second inequality above, we have used
\[
     h_p(\tau)\,\kappa_p^{n-1}\,\max\left\{\frac{1}{\vert 2^{n}\vert_{p}}, \frac{1}{\vert \det{\s}\vert_{p}}\right\}\geq 1\cdot 1\cdot \dd[1, \vert 2^n\vert_{p}]\geq 1.
\]
This also implies that $\alpha_p \geq \Vert F \Vert_p$.  Consequently,
\be \label{UP}
\Vert U_p \Vert_p \leq \max\{\Vert U_p + F \Vert_p, \Vert F \Vert_p\} \leq \alpha_p.
\ee

Since $\vert \det(\tau - \tau_p)\vert_p \leq h_p(\tau)^n\leq \kappa_p^n$,  we have
\[
        \begin{aligned}
            \vert \det{(U_p + F)}\vert_{p} &= \left\vert 2^n \det(F) \frac{\det(\tau)}{\det(\tau - \tau_p)}\right\vert_{p}\\
                                    &\geq \frac{\vert 2^n \det(F) \det(\s)\vert_{p}}{\kappa_p^n} = \beta_p.
        \end{aligned}
\]
As a result,
\be \label{e}
\frac{\beta_p}{\kappa_p\alpha_p^n} \leq \frac{\beta_p}{\alpha_p^n} \leq \frac{\vert \det(U_p + F)\vert_p}{h_p(U_p + F)^n} \leq 1.
\ee

Let $\ve$ be as defined in \eqref{ep}, which is $\leq 1$ by \eqref{e}.   It follows from \eqref{UP} that $dU_p \in \mathrm{Skew}_n(\Z_p)$ for all $p \in P$.  We may then apply Lemma \ref{WAM} entry-wise and obtain a $U \in \frac{1}{d}\textrm{Skew}_n(\Z)$ such that
\be \label{U}
\no[U] \leq \frac{C}{d} \qquad \mbox{ and } \qquad \Vert U - U_p\Vert_p < \frac{\beta_p}{\kappa_p\alpha_p^n}, \qquad \forall\, p \in P.
\ee
Then, by \eqref{e},
$$\Vert U - U_p\Vert_p < \frac{\beta_p}{\kappa_p\alpha_p^n} \leq \frac{\beta_p}{\alpha_p^n} \leq \frac{\vert \det(U_p + F)\vert_p}{h_p(U_p + F)^n}.$$
Thus, by Lemma \ref{DET}, $\vert \det(U + F) \vert_p = \vert \det(U_p + F) \vert_p \geq \beta_p$, implying that $\det(U + F) \neq 0$.  Together with \eqref{U}, we obtain \eqref{detu+a}.

Now we come to the proof of \eqref{TSTI3}.  It follows from \eqref{CT1} and \eqref{HATTAU} that
$$
\hat{\tau} - \tau_{p} = (U + F)^{-1}(U-U_{p})(\tau-\tau_{p}).
$$
Then,
\[
 \begin{aligned}
            \no[\hat{\tau} - \tau_{p}]_{p} &\leq \no[(U+F)^{-1}]_{p} \no[\tau-\tau_{p}]_{p}\no[U-U_{p}]_{p}\\
                                             &\leq \frac{\no[U+F]_{p}^{n-1}}{\vert\det(U+F)\vert_{p}} \, h_p(\tau)\,\no[U-U_{p}]_{p}.
\end{aligned}
\]
Since $\Vert U_p + F \Vert_p \leq \alpha_p$ and $\Vert U - U_p\Vert_p < \frac{\beta_p}{\alpha_p^n} \leq 1 \leq \alpha_p$, we have $\Vert U + F \Vert_p \leq \alpha_p$.  Therefore,
\[
\begin{aligned}
            \no[\hat{\tau} - \tau_{p}]_{p} &\leq \frac{\alpha_p^{n-1}}{\beta_p}\, \kappa_p\, \no[U-U_{p}]_{p}\\
                                             & \leq \frac{\alpha_p^{n-1}}{\beta_p}\, \kappa_p\, \frac{\beta_p}{\kappa_p\alpha^n}\\
                                             & = \alpha_p^{-1}\\
                                             & \leq 1.
\end{aligned}
 \]
This finishes the proof of the theorem.
\end{proof}

\section*{Acknowledgment}

Haochen Gao would like to thank the support from Wesleyan Summer Research Program during the summers of 2019 and 2020 when part of this project was carried out. Han Li acknowledges support by the NSF Grant DMS 1700109.

\bibliographystyle{plain}

\end{document}